\definecolor{shade}{gray}{0.8}
        {
          \raggedright
        \setlength{\rightmargin}{\leftmargin}
        \setlength{\itemsep}{-12pt}
        \setlength{\parsep}{20pt}
        \begin{lrbox}{\@tempboxa}%
        \begin{minipage}{\linewidth-2\fboxsep}
        }%
        {
        \end{minipage}%
        \end{lrbox}%
        \fcolorbox{black}{shade}{\usebox{\@tempboxa}}\newline
        }%
\newtheorem{theorem}{Theorem}
\newtheorem{lemma}{Lemma}
\newtheorem{cor}{Corollary}
\renewcommand{\d}{{\rm d}}
\newcommand{\PP}{\mathbb{P}^\eta}
\newcommand{\EE}{\mathbb{E}^\eta}
\renewcommand{\eqref}[1]{\hyperref[#1]{(\ref*{#1})}}
\DeclareMathOperator{\sgn}{sgn}
\newcommand{\dd}{\mathrm{d}}
\newcommand*{\pref}[1]{\hyperref[#1]{(\ref*{#1})}}
\newcommand*{\refpref}[2]{\hyperref[#2]{\ref*{#1}(\ref*{#2})}}
\newcommand{\D}{{\rm d}}
\newcommand{\R}{\mathbb{R}}
\title{ The Doob--McKean  identity  for  stable L\'evy  processes}
\author{  Andreas E. Kyprianou\thanks{Department of Mathematical Sciences, University of Bath, Claverton Down, Bath, BA2 7AY, UK. Email: \texttt{a.kyprianou@bath.ac.uk}}, \ Neil O'Connell\thanks{School of Mathematics and Statistics, University College Dublin, Dublin 4,
Ireland.
E-mail: \texttt{neil.oconnell@ucd.ie}
}
$^,$\thanks{Research supported by the European Research Council (Grant No. 669306).}
\\
{\it  On the occasion of Ron Doney's 80th birthday}
}  
\begin{document}

\maketitle

\begin{abstract}
\noindent We re-examine the celebrated Doob--McKean identity that identifies a conditioned one-dimensional Brownian motion 
as the radial part of a 3-dimensional Brownian motion or, equivalently, a Bessel-3 process, albeit now in the analogous setting of isotropic $\alpha$-stable processes. We find a natural analogue that matches  the Brownian setting, with the role of the Brownian motion replaced by that of the isotropic $\alpha$-stable process, providing one interprets the components of the original identity in the right way. 

\medskip

\noindent {\bf Key words:} Cauchy Processes, Doob h-transform, Radial process
\medskip

\noindent {\bf Mathematics Subject Classification:}  60J80, 60E10.
\end{abstract}

\section{Introduction} A now-classical result in the theory of Markov processes due to Doob \cite{doob} and McKean \cite{mckean} equates the law of a Brownian motion conditioned to stay positive with that of a Bessel-3 process; see also  \cite{williams1, pitman, williams2}. A precise statement of this identity can be made in a number of different ways as each of the two processes that are equal in law have several different representations. For the purpose of this exposition, it is worth reminding ourselves of them.

Denote by $\mathbb{D}(\mathbb{R})$ the space of c\`adl\`ag paths  $\omega : [0,\infty) \to \mathbb{R}\cup \Delta$ with lifetime $\zeta = \inf\{t>0 : \omega_t  =\Delta\}$, where $\Delta$ is a cemetery state.  The space $\mathbb{D}(\mathbb{R})$ will be equipped with the Skorokhod topology and its natural Borel $\sigma$-algebra into which is embedded the natural filtration $(\mathcal{F}_s,s\ge 0)$. On this space, we will denote by  $B = (B_t, t\geq 0)$ the coordinate process whose  probabilities $\mathbb{P} = (\mathbb{P}_x, x\in\mathbb{R})$ are those of  a standard one dimensional Brownian motion. For each $t\geq 0$, $x>0$, the limit 
\begin{equation}
\mathbb{P}^\uparrow_x(A,\, t<\zeta): = \lim_{\varepsilon\to0}\mathbb{P}_x (A, t<\mathbf{e}/\varepsilon \, |\, \tau^-_0(B)>\mathbf{e}/\varepsilon ),
\label{condition}
\end{equation}
where $\mathbf{e}$ is an independent exponentially distributed random variable with unit mean, $\tau^-_0(B) = \inf\{t>0: B_t<0\}$, defines a new family of probabilities on $\mathbb{D}(\mathbb{R}_{\geq 0}) := \{\omega \in\mathbb{D}(\mathbb{R}) : \omega \in (0,\infty)\cup\Delta\}$. It turns out that $\mathbb{P}^\uparrow = (\mathbb{P}^\uparrow_x, x>0)$ defines a conservative (i.e. $\zeta = \infty$) Markov process on $[0,\infty)$. As such, $(B,\mathbb{P}^\uparrow)$ is the  sense in which we can understand  Brownian motion conditioned to stay positive.

\medskip

Thanks to the well known fact that the probability $\mathbb{P}_x (\tau^-_0(B) >t )\sim  x/\sqrt{2\pi t}$, as $t\to\infty$, it is easy to verify by taking its Laplace transform followed by an integration by parts, then an application of  the classical Tauberian Theorem, that, up to an  constant $c>0$,  $\mathbb{P}_x (\tau^-_0(B) >\mathbf{e}/\varepsilon)\sim  cx\sqrt{\varepsilon}$.
One thus easily verifies  
from \eqref{condition}, with the help of an easy dominated convergence argument,  that $(B,\mathbb{P}^\uparrow)$ satisfies 
\begin{equation}
\left. \frac{\D \mathbb{P}^\uparrow_x}{\D \mathbb{P}_x}\right|_{\mathcal{F}_t}   =\frac{B_t}{x}\mathbf{1}_{\{t<\tau^-_0(B)\}}, \qquad x, t>0.
\label{COM}
\end{equation}
The change of measure \eqref{COM} presents a second definition of the Brownian motion conditioned to stay positive via a Doob $h$-transform with respect to Brownian motion killed on exiting $[0,\infty)$, using the harmonic function $h(x)= x$. Suppose we write $p_t(x,y)$ and $p^\dagger_t(x, y)$, $t\geq0$, $x,y>0$, for the transition density of Brownian motion and of Brownian motion killed on exiting $[0,\infty)$, respectively. Then another way of expressing \eqref{COM} is via the harmonic transformation
\begin{equation}
p^\uparrow(x, y) := \frac{y}{x}p_t^\dagger(x,\D y) = \frac{y}{x} (p_t(x,y) - p_t(x,-y)) , \qquad x,y>0.
\label{transitions}
\end{equation}

\medskip
As alluded to above, the so-called {\it Doob--McKean identity} states that the process $(B,\mathbb{P}^\uparrow)$ is equal in law to a Bessel-3 process.  
 There are also several ways that one may define the latter processes. Among the many,  there are three that we mention here. 
\medskip

 As a parametric family indexed by $\nu\geq0$, Bessel-$\nu$ processes are defined as non-negative valued, conservative, one-dimensional diffusions which can be identified via the action of their generator $L^\nu$, which satisfies
\begin{equation}
\label{generator}
L^\nu  = \frac{1}{2}\left(\frac{\D^2}{\D x^2} + \frac{\nu -1}{x}\frac{\D}{\D x}\right), \qquad x>0, 
\end{equation}
such that the point $0$ is treated as an absorbing boundary if $\nu = 0$, as a reflecting boundary if $\nu \in(0,2)$
 and as an entrance boundary if $\nu\geq 2$.  As such, the associated transition density can be identified as a non-zero solution to the backward equation given by $L^\nu$.  In general, the 
 transition density can be identified explicitly with the help of Bessel functions (hence the name of the family of processes).
 In the special case that  $\nu = 3$, it turns out that the transition density  can be more simply identified  by the right-hand side of \eqref{transitions}.  
 \medskip

In the setting that $\nu$ is a natural number, in particular, in the case that $\nu = 3$, the generator \eqref{generator} is also the radial component of the $\nu$-dimensional Laplacian. Noting that the latter is the generator of a $\nu$-dimensional Brownian motion, we also see that, for positive integer values of $\nu$, the Bessel-$\nu$ process is also the radial distance from the origin of a $\nu$-dimensional Brownian motion; cf. \cite{KMcG}.  This also illuminates the need for the point $0$ to be either reflecting or an entrance point when $\nu>0$, at least for $\nu\in\mathbb{N}$.
 
%

\medskip

The Doob--McKean identity is present-day nested in a much bigger dialogue concerning the representation of conditioned, path-segment-sampled and time-reversed stochastic processes, including general diffusions, random walks and L\'evy processes; see e.g. \cite{doob, mckean, williams1, pitman, williams2, bertoinbook, RW2, BD, chaumont, CD1, CD2} and others. In this article we add to the list of extensions to the Doob--McKean identity by looking at the setting in which the role of the Brownian motion is replaced by an isotropic  $\alpha$-stable  process. 

\medskip


\section{Doob-McKean for isotropic $\alpha$-stable processes}
We recall that an isotropic $\alpha$-stable process (henceforth sometimes referred to as a stable process or a symmetric stable process in one dimension) in dimension $d\in\mathbb{N}$, with coordinate process say $X = (X_t,t\geq 0)$ and probabilities $\texttt{P}^{\alpha,d} = (\texttt{P}^{\alpha, d}_x, x \in \mathbb{R}^d)$,   is a L\'evy process which is also a self-similar Markov process, which has self-similarity index $\alpha$. More precisely, as a L\'evy process, its transitions are uniquely described by its characteristic exponent given by the identity
\[
\texttt{E}^{\alpha, d}_0[\exp({\rm i}\theta X_t)] = \exp(-|\theta|^\alpha t), \qquad t\geq 0,
\]
where we interpret $\theta X_t$ as an inner product in the setting that $d\geq 2$. For the pure jump case that we are interested in, it is necessary that $\alpha\in(0,2)$.
As a self-similar Markov process with index $\alpha$, it satisfies the scaling property that, for all $c>0$,
\begin{equation}
(cX_{c^{-\alpha}t}, t\geq0) \text{ under $\texttt{P}^{\alpha, d}_x$ is equal in law to }(X,\texttt{P}^{\alpha, d}_{cx}).  
\label{scalingC}
\end{equation}
In any dimension, $(X , \texttt{P}^{\alpha,d} )$ has a transition density and, for example, in the setting $d=1$, if we denote it by   $q^{(\alpha)}_t(x,y)$, $x,y\in\mathbb{R}$, then the scaling property \eqref{scalingC} manifests in the form
\begin{equation}
cq^{(\alpha)}_t(cx, cy) =q^{(\alpha)}_{c^{-\alpha}t}(x,y), \qquad x,y\geq 0, t>0.
\label{Cauchyss}
\end{equation}

We note that the Cauchy process has a symmetric distribution in one dimension and is isotropic in higher dimensions. As a L\'evy process, its jump measure is given by 
\begin{equation}
\Pi(\d z) = 2^{\alpha}\pi^{-d/2}\frac{\Gamma((d+\alpha)/2)}{\big|\Gamma(-\alpha/2)\big|}\frac{1}{|z|^{\alpha +d}}\,\d z,
\qquad z\in\mathbb{R}^d
 \label{otherrepscartesian}
 \end{equation}
where $B$ is a Borel set in $\mathbb{R}^d$. A special case of interest will be when $\alpha =1$ and when $d=1$, in which case, \eqref{otherrepscartesian}
takes the form 
\[
\Pi(\D x) = \frac{1}{\pi}\frac{1}{x^2}\D x, \qquad x\in\mathbb{R}.
\]
Moreover, the transition density, more conveniently written as $(q_t, t\geq 0)$ rather than $(q^{(1)}_t, t\geq0)$, is given by 
\begin{equation}
q_t(x,y) = \frac{1}{\pi}\frac{t}{(y-x)^2 + t^2}, \qquad x,y\in \mathbb{R}, t> 0,
\label{Cauchydensity}
\end{equation}
from which we can verify the scaling property  \eqref{scalingC} directly. 

\medskip

Given the summary of the the  Doob--McKean identity for the Brownian setting above, the stable-process analogue we present as our main result below matches perfectly the Brownian setting providing one interprets the components in the identity in the right way. 

\begin{theorem}\label{CauchyDM}
The kernel 
\begin{align}
q^{(\alpha),*}_t(x,y) &= \frac{y}{x}\left(q^{(\alpha)}_t(x,y)- q^{(\alpha)}_t(x,-y)\right)\qquad x,y\geq0, t>0 
\label{newSG}
\end{align}
defines a conservative Feller semigroup, say $Y = (Y_t, t\geq0)$, on $[0,\infty)$ which is self-similar with index $\alpha$. Moreover,  $Y$ is equal in law to the radial part of a three-dimensional isotropic $\alpha$-stable process.
\end{theorem}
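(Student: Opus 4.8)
The plan is to compute directly the transition density of the radial part $R=(|X_t|)_{t\ge 0}$ of a three-dimensional isotropic $\alpha$-stable process $X$, to show it coincides with the kernel $q^{(\alpha),*}$ of \eqref{newSG}, and then to read off every assertion of the theorem from this one identity. The reason this works --- and the reason the dimension is exactly $3$ and the $h$-function exactly $y/x$, as in the Brownian identity \eqref{transitions} --- is that in dimension $d=3$ the relevant spherical averages collapse to elementary one-dimensional quantities. Throughout write $\rho_t$ for the density of the one-dimensional stable law, so $q^{(\alpha)}_t(x,y)=\rho_t(y-x)$, and use the standard facts that $\rho_t\in C^\infty$ is even and unimodal with $\rho_t(z)=O(|z|^{-1-\alpha})$ and $\rho_t'(z)=O(|z|^{-2-\alpha})$ as $|z|\to\infty$; write $q^{(\alpha),3}_t$ for the transition density of $X$ on $\RR^3$.

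The computation has two steps. First, locate the pole of $q^{(\alpha),3}_t$: substituting polar coordinates $\theta=s\omega$ ($s\ge0$, $\omega\in S^2$) into $q^{(\alpha),3}_t(0,y)=(2\pi)^{-3}\int_{\RR^3}e^{-\iu\theta\cdot y-t|\theta|^\alpha}\,\D\theta$ and using the classical identity $\int_{S^2}e^{\iu\xi\cdot\omega}\,\D\sigma(\omega)=4\pi\,|\xi|^{-1}\sin|\xi|$ for the surface measure $\D\sigma$ on the unit sphere gives
\[
q^{(\alpha),3}_t(0,y)=\frac{1}{2\pi^2|y|}\int_0^\infty s\sin(s|y|)\,e^{-ts^\alpha}\,\D s = -\frac{\rho_t'(|y|)}{2\pi|y|},
\]
where the second equality is $\rho_t'(x)=-\pi^{-1}\int_0^\infty s\sin(sx)\,e^{-ts^\alpha}\,\D s$, obtained by differentiating $\rho_t(x)=\pi^{-1}\int_0^\infty\cos(sx)\,e^{-ts^\alpha}\,\D s$ (the differentiated integral being only conditionally convergent, the interchange should be justified by a preliminary integration by parts or a smooth cutoff). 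Second, take the spherical average from a starting point of modulus $r>0$: the radial transition density is $p^R_t(r,\rho)=\rho^2\int_{S^2}q^{(\alpha),3}_t(re_1,\rho\omega)\,\D\sigma(\omega)$, and writing $\psi$ for the angle between $\omega$ and $e_1$ and putting $u=|\rho\omega-re_1|$, so that $u^2=r^2+\rho^2-2r\rho\cos\psi$ sweeps $[|\rho-r|,\rho+r]$ and $u\,\D u=r\rho\sin\psi\,\D\psi$, the factor $u$ in the denominator of $q^{(\alpha),3}_t(0,\rho\omega-re_1)=-\rho_t'(u)/(2\pi u)$ cancels against the Jacobian and the integral telescopes:
\[
p^R_t(r,\rho)=\rho^2\cdot 2\pi\!\int_{|\rho-r|}^{\rho+r}\!\left(-\frac{\rho_t'(u)}{2\pi u}\right)\frac{u\,\D u}{r\rho}
=\frac{\rho}{r}\bigl(\rho_t(\rho-r)-\rho_t(\rho+r)\bigr)=q^{(\alpha),*}_t(r,\rho),
\]
the last step using \eqref{newSG} and evenness of $\rho_t$ (so $\rho_t(|\rho-r|)=\rho_t(\rho-r)$ and $\rho_t(\rho+r)=q^{(\alpha)}_t(r,-\rho)$).

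Granting this, the remaining claims are bookkeeping. By rotational invariance of $X$, $R=|X|$ is a conservative c\`adl\`ag Markov process on $[0,\infty)$, and its transition density has just been identified as $q^{(\alpha),*}$; hence $q^{(\alpha),*}$ is a conservative Markovian semigroup and $Y\eqd R$, which is the asserted identity. Self-similarity with index $\alpha$ is immediate since $y/x$ is scale invariant, so the scaling identity \eqref{Cauchyss} (valid for all real arguments) passes verbatim to $q^{(\alpha),*}$. For the Feller property on $[0,\infty)$ I would argue from the explicit density: dominated convergence --- using the mean value theorem and the bound $\rho_t'(z)=O(|z|^{-2-\alpha})$ to dominate $\tfrac{y}{x}\bigl(\rho_t(y-x)-\rho_t(y+x)\bigr)$ --- shows $q^{(\alpha),*}_t(x,\cdot)$ converges as $x\downarrow0$ to the probability density $q^{(\alpha),*}_t(0,y):=-2y\rho_t'(y)$ (so $0$ is an entrance point, with $\int_0^\infty(-2y\rho_t'(y))\,\D y=1$ by parts), shows $q^{(\alpha),*}_t$ maps $C_0([0,\infty))$ into itself (vanishing at $\infty$ following by compact-support approximation plus the contraction property), and the strong continuity $q^{(\alpha),*}_tf\to f$ as $t\downarrow0$ follows since $R_t\to R_0$ almost surely.

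I expect the main obstacle to be exactly this last point: verifying the Feller property at the boundary $0$ and at infinity, i.e. carrying out the dominated-convergence estimates on $\tfrac{y}{x}\bigl(\rho_t(y-x)-\rho_t(y+x)\bigr)$; the two displays above, though the conceptual heart, are short. It is also worth recording that the semigroup and conservativeness statements can be had without $\RR^3$: a short symmetry argument using the Chapman--Kolmogorov equation for $q^{(\alpha)}$ shows $p^{+}_t(x,y):=q^{(\alpha)}_t(x,y)-q^{(\alpha)}_t(x,-y)$ is a sub-Markovian semigroup on $(0,\infty)$, and $h(x)=x$ is \emph{invariant} for it since $y\bigl(\rho_t(y-x)-\rho_t(y+x)\bigr)$ is an even, absolutely integrable function of $y$ with integral over $(0,\infty)$ equal to $x$ --- this survives even for $\alpha\le1$, though the two first moments composing it diverge individually. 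Thus $q^{(\alpha),*}=(y/x)\,p^{+}$ is a conservative Doob $h$-transform, and the content of Theorem \ref{CauchyDM} is that this $h$-transform is realised by the three-dimensional radial stable process, precisely mirroring Doob--McKean.
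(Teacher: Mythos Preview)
Your proof is correct and takes a genuinely different route from the paper. The paper's argument is pure subordination: it invokes the representation of an isotropic $\alpha$-stable process as $\sqrt{2}B^{(d)}_{\Lambda_t}$ with $\Lambda$ an independent $\alpha/2$-stable subordinator, and then simply integrates the classical Brownian identity \eqref{transitions} against the subordinator's transition kernel. In other words, the paper reduces Theorem~\ref{CauchyDM} to the original Doob--McKean statement and never computes anything three-dimensional.

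You instead compute the radial semigroup of the three-dimensional stable process directly, via the Fourier identity $q^{(\alpha),3}_t(0,y)=-\rho_t'(|y|)/(2\pi|y|)$ and the change of variables $u=|\rho\omega-re_1|$ that collapses the spherical integral. This has two virtues the paper's proof lacks: it is self-contained (the Brownian Doob--McKean identity is not used), and it makes transparent \emph{why} the dimension is $3$ and the $h$-function is $y/x$ --- they emerge from the cancellation of the Jacobian $u\,\D u/(r\rho)$ against the pole $1/u$, which is special to $d=3$. Your argument also applies verbatim to any isotropic L\'evy process whose $1$- and $3$-dimensional marginals share a radial exponent, not only those obtained by subordinating Brownian motion. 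The paper's route, by contrast, is shorter and more conceptual: it exhibits the stable identity as literally the Brownian one viewed along a random time scale, which is the narrative the authors want. Your handling of the Feller property at $0$ and $\infty$ is comparable in rigour to the paper's, which simply appeals to the known fact that the radial part of an isotropic stable process is Feller.
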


An easy corollary of the above result is the following.
\begin{cor}\label{cor}
The transition density of the radial part of a  3-dimensional Cauchy process  is given by 
\begin{equation}
q^{(1),*}_t(x,y) = \frac{1}{\pi}\frac{4y^2t}{ (y^2-x^2)^2 +2t^2 (y^2 + x^2) + t^4 },\qquad x,y\geq0, t>0.
\end{equation}
\end{cor}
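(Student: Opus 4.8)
The plan is to obtain Corollary~\ref{cor} directly from Theorem~\ref{CauchyDM} by setting $\alpha = 1$ and carrying out one elementary algebraic simplification. First I would invoke Theorem~\ref{CauchyDM} with $\alpha=1$: it tells us that the radial part of a three-dimensional Cauchy process has transition density $q^{(1),*}_t(x,y)$ given by \eqref{newSG}, i.e. $q^{(1),*}_t(x,y) = (y/x)\,(q_t(x,y) - q_t(x,-y))$, where $q_t$ is the one-dimensional Cauchy transition density recorded in \eqref{Cauchydensity}.

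Next I would substitute \eqref{Cauchydensity} into this expression and combine the two fractions over the common denominator $[(y-x)^2 + t^2]\,[(y+x)^2 + t^2]$. The resulting numerator is $(yt/\pi)\,([(y+x)^2 + t^2] - [(y-x)^2 + t^2]) = (yt/\pi)\,((y+x)^2 - (y-x)^2) = (yt/\pi)\cdot 4xy$, so the factor $x$ cancels the prefactor $1/x$ and one is left with $4y^2 t/\pi$ over the same denominator. Finally I would expand the denominator using $(y-x)^2 + (y+x)^2 = 2(y^2+x^2)$ and $(y-x)^2(y+x)^2 = (y^2-x^2)^2$, obtaining $[(y-x)^2 + t^2]\,[(y+x)^2 + t^2] = (y^2-x^2)^2 + 2t^2(y^2+x^2) + t^4$, which is precisely the claimed formula.

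I do not expect any genuine obstacle: all the probabilistic content is already carried by Theorem~\ref{CauchyDM}, and what remains is a single manipulation of rational functions. As an optional independent sanity check one could verify directly that the right-hand side is nonnegative on $(0,\infty)^2$, integrates to $1$ in $y$ over $(0,\infty)$, and satisfies the self-similarity relation $c\,q^{(1),*}_t(cx,cy) = q^{(1),*}_{c^{-1}t}(x,y)$ inherited from \eqref{Cauchyss}; but each of these properties is already guaranteed by the theorem, so they serve only as a cross-check rather than as part of the argument.
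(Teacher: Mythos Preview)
Your proposal is correct and matches the paper's intent: the paper does not give a separate proof of this corollary, simply presenting it as ``an easy corollary'' of Theorem~\ref{CauchyDM}, which is exactly the substitution-and-simplification you carry out. The algebraic steps you outline are accurate and complete.
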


\begin{proof}[Proof of Theorem \ref{CauchyDM}]
The proof is a relatively elementary consequence of the classical Doob--McKean identity once one takes account of the following basic fact; cf. e.g. Chapter 3 of \cite{KPbook}.

\begin{lemma}\label{subordinate} If $ (B^{(d)}_t, t\geq 0)$ is a standard $d$-dimensional Brownian motion ($d\geq 1$) and $\Lambda = (\Lambda_t, t\geq 0)$ is an independent  stable subordinator with index $\alpha/2$, where $\alpha\in(0,2)$, then $( \sqrt{2}B^{(d)}_{\Lambda_t}, t\geq 0)$ is an isotropic $d$-dimensional stable process with index $\alpha$. 
\end{lemma} 

An immediate consequence of Lemma \ref{subordinate} is that, e.g. in one dimension, we can identify the semigroup of a symmetric stable process with index $\alpha$ via
\[
q_t^{(\alpha)}(x,y) =\int_{0}^\infty  \gamma^{(\alpha/2)}_t(s) \frac{1}{2^{d/2}}
p^{(d)}_s(x, y/\sqrt{2})\d s
\]
where $p^{(d)}_t(x,y)$, $x,y\in\mathbb{R}^d$ is the transition density of a standard Brownian motion in $\mathbb{R}^d$ (and for consistency we have $p^{(1)}_t = p_t$, $t\geq0$.)
\[
\gamma^{(\alpha/2)}_t(s)
= \frac{1}{\pi}\sum\limits_{n\ge 1} (-1)^{n-1} \frac{\Gamma(1+\frac{\alpha n}{2})}{n!} \sin\left(\frac{n \pi \alpha}{2}\right) 
t^n s^{-\frac{n \alpha}{2} -1}, \qquad x>0,
\]
is the transition density of the stable subordinator with index $\alpha/2$.

\medskip

Replacing $y$ by $y/\sqrt{2}$ in \eqref{transitions} and dividing through by $\sqrt{2}$, by integrating against the kernel $\gamma^{(\alpha/2)}$ we see with the help of Lemma \ref{subordinate} that  
\begin{align*}
\frac{1}{\sqrt{2}}\int_0^\infty  \gamma^{(\alpha/2)}_t(s) p^\uparrow_s(x,y/\sqrt{2})\d s
& = \frac{y}{x}\left(q^{(\alpha)}_t(x,y)- q^{(\alpha)}_t(x,-y)\right), \qquad x, y\geq 0, t\geq0.
\end{align*}
Writing $\mathbb{P}^{(3)}$ for the law of $3$-dimensional Brownian motion with coordinate process $(B^{(3)}_t, t\geq0)$ as a coordinate process on  $\mathbb{D}(\mathbb{R})$. Since $(p^\uparrow_t, t\geq0)$ is the transition density of of a Bessel-3 process, which is also the transition density of the radius of a 3-dimensional standard Brownian motion, 
we know that 
\[
\frac{1}{\sqrt{2}} p^\uparrow_s(x,y/\sqrt{2})\d y = \mathbb{P}^{(3)}_{(x,0,0)}(|\sqrt{2}B^{(3)}_t|\in \d y), \qquad y,t\geq0.
\]
As such, it follows that 
\[
\frac{1}{\sqrt{2}}\int_0^\infty  \gamma^{(\alpha/2)}_t(s) p^\uparrow_s(x,y/\sqrt{2})\d s =
 \mathbb{P}^{(3)}_{(x,0,0)}(|\sqrt{2}B^{(3)}_{\Lambda_t}|\in \d y),
\]
where $\Lambda$ is an independent stable subordinator with index $\alpha/2$. Lemma \ref{subordinate} now allows us to conclude that \eqref{newSG}
agrees with the transition semigroup of the radial component of a 3-dimensional stable process. 
On account of the fact that the radial component of an isotropic stable process is a conservative self-similar Markov process (and in particular a Feller process), we see that the semigroup in \eqref{newSG} must also offer the same properties. This also includes the existence of an entrance law at zero which is affirmed by the representation given in Lemma \ref{subordinate}.
\end{proof}

\section{The special case of Cauchy processes}
The special case of the Doob--McKean identity for $\alpha = 1$, i.e. the Cauchy process, reveals a few more details that we can explore further. In the subsections below, we look at the Doob--McKean identity in in terms of the Lamperti representation of self-similar Markov processes, its relation with the Cauchy process conditioned to stay positive and in terms of a pathwise interpretation. 

\subsection*{Lamperti representation of the Doob-McKean identity}
As a self-similar Markov process with index 1, the process $Y$ in Theorem \ref{CauchyDM} when $\alpha = 1$ enjoys a Lamperti representation. Specifically, 
\begin{equation}
\label{pssMp2}
Y_t = {\rm e}^{\xi_{\varphi(t)}}, \qquad t\leq \int_0^\infty {\rm e}^{\xi_u}\D u, 
\end{equation}
where $\varphi(t) = \inf\{s>0: \int_0^s\exp(\xi_u)\D u>t\}$ and  $(\xi_t, t\geq0) $  is a L\'evy process, which is possibly killed at an independent and exp

\medskip

Another way of understanding the statement in the second part of Theorem \ref{CauchyDM} is that the L\'evy process $\xi$ agrees with the one that underlies the Lamperti representation of the radial part of a three-dimensional Cauchy process. The reason why the latter is a positive  self-similar Markov process was examined in \cite{CPP}; see also Chapter 5 of \cite{KPbook}. Indeed, there it was shown that the radial part of a 3-dimensional Cauchy process has underlying L\'evy process, say $(\eta_t ,t\geq0)$, with probabilities $(\PP_x, x\in\mathbb{R})$,  which is identified via its characteristic exponent $\Psi(z) = -\log \int_{\mathbb{R}}{\rm e}^{{\rm i}zx}\PP_0(\eta_1\in \D x)$, where
\begin{align*}
\Psi(z)
& = 2\frac{\Gamma(\frac{1}{2}(-{\rm i}z +1 ))}{\Gamma(-\frac{1}{2}{\rm i}z)}\frac{\Gamma(\frac{1}{2}({\rm i}z +3))}{\Gamma(\frac{1}{2}({\rm i}z +2))}
=
(z-{\rm i})\tanh(\pi z/2), \qquad z\in\mathbb{R}.
\label{a}
\end{align*}
An equivalent way of identifying $\eta$ is as a pure jump process, with no killing (note that $\Psi(0) =0$) and  with L\'evy measure having density taking the form  
  \begin{equation}
\mu(x)=\frac{4}{\pi}
\frac{{\rm e}^{3x}}{({\rm e}^{2x}-1)^{2}}, \qquad x\in\mathbb{R}.
 \label{HGdensity}
 \end{equation}
Note that for small $|x|$ the density above behaves like $O(|x|^{-2})$, for large positive $x$, it behaves like $O({\rm e}^{-x})$ and for large negative $x$, it behaves like $O({\rm e}^{-3|x|})$. As such, the process $\eta$ has paths of unbounded variation and its law enjoys exponential moments; in particular $\eta$ has a finite first moment. 
\medskip

The long term linear growth of $\eta$ (in the sense of the Strong Law of Large Numbers) is given by the mean $\EE_0[\eta_1] = \pi /2$
which can also be computed from the value of ${\rm i}\Psi'(0)$; see also Proposition 1 of \cite{KP}. Not surprisingly this implies that $\lim_{t\to\infty}{\eta_t} = \infty$ almost surely. This is consistent with the fact that a three-dimensional Cauchy process is transient and hence, its radial component drifts to $+\infty$, which implies its underlying L\'evy process must too. Note, in the latter observation, we are also using the fact that positive self-similar Markov processes are either: Transient to infinity, corresponding to the underlying L\'evy process drifting to $+\infty$; Interval recurrent, corresponding to the underlying L\'evy process oscillating; Continuously absorbed at the origin, corresponding to the case that the underlying L\'evy process drifts to $-\infty$; Absorbed at the origin by a jump; corresponding to the case that the underlying L\'evy process is killed at an independent and exponentially distributed time. See \cite{Lamperti, Kbook, KPbook} for further details.
\medskip

Because $\eta$ has a finite first moment, we can relate \eqref{HGdensity} to \eqref{a} via the particular arrangement of the L\'evy--Khintchine formula 
\begin{equation}
\Psi(z)  = - \frac{\pi}{2}{\rm i}z+ \int_\mathbb{R}\left(1-{\rm e}^{{\rm i}z x}+ {\rm i}zx\right)\mu(x)\D x,\qquad z\in\mathbb{R}.
\label{arrangement}
\end{equation}
This arrangement will prove to be convenient in the following Corollary.

\begin{cor}\label{L2} Suppose that $\mathcal{C}^2(\mathbb{R}_{\geq 0})$ is the space of twice continuously integrable functions on $\mathbb{R}_{\geq 0}$. On $\mathcal{C}^2(\mathbb{R}_{\geq 0})$, the action of the generator $\mathcal{L}$ associated to the process $Y$ in Theorem \ref{CauchyDM} is given by 
\begin{align}
\mathcal{L}f(x)& = \frac{\pi}{2}f'(x)+\frac{4}{\pi x}\int_0^\infty\left(f(xu)-f(x)-xf'(x)\log u\right)\frac{u^2}{(u^2-1)^2}\D u,
\qquad x>0
\label{Leta}
\end{align}
which agrees with the representation
\begin{align}
\mathcal{L}  f(x)& = \frac{4}{\pi x}(PV)\!\!\int_0^\infty\left(f(xu)-f(x)\right)\frac{u^2}{(u^2-1)^2}\D u,
\qquad x>0,
\label{Leta2}
\end{align}
where $(PV)\!\int$ is understood as a principal value integral.
\end{cor}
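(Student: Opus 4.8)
The plan is to derive both \eqref{Leta} and \eqref{Leta2} from the Lamperti representation \eqref{pssMp2} of $Y$ (in the case $\alpha=1$) together with the explicit L\'evy--Khintchine data of $\eta$ recorded in \eqref{HGdensity} and \eqref{arrangement}, and then to reconcile the two displays by evaluating one elementary definite integral. First, recall (see \cite{Lamperti, KPbook}) that for a positive self-similar Markov process of index $1$ with underlying L\'evy process $\eta$, whose generator we write $\mathcal{L}^\eta$, the generator $\mathcal{L}$ of $Y$ acts on a sufficiently smooth $f$ on $(0,\infty)$ by $\mathcal{L}f(x)=x^{-1}\mathcal{L}^\eta g(\log x)$, where $g(y)=f(\mathrm{e}^{y})$; this is just the infinitesimal form of the Lamperti time change, since $\varphi(t)\sim t/x$ as $t\downarrow 0$ when $Y_{0}=x$.

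Since $\eta$ is a pure-jump L\'evy process of unbounded variation with no Gaussian component, finite mean $\EE_{0}[\eta_{1}]=\pi/2$ and L\'evy density $\mu$ as in \eqref{HGdensity}, the arrangement \eqref{arrangement} of the L\'evy--Khintchine formula gives
\[
\mathcal{L}^\eta g(y)=\frac{\pi}{2}\,g'(y)+\int_{\mathbb{R}}\bigl(g(y+w)-g(y)-w\,g'(y)\bigr)\mu(w)\,\D w .
\]
Taking $g=f\circ\exp$, so that $g'(\log x)=xf'(x)$ and $g(\log x+w)=f(x\mathrm{e}^{w})$, and substituting $u=\mathrm{e}^{w}$, for which $\mu(\log u)\,u^{-1}\,\D u=\tfrac{4}{\pi}\,u^{2}(u^{2}-1)^{-2}\,\D u$ by \eqref{HGdensity}, turns $\mathcal{L}f(x)=x^{-1}\mathcal{L}^\eta g(\log x)$ into precisely \eqref{Leta}. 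Along the way one records that the integrand in \eqref{Leta} is $O((u-1)^{2})$ near $u=1$, while it is bounded by a constant times $u^{2}|\log u|$ near $0$ and by a constant times $|\log u|\,u^{-2}$ near $\infty$, so \eqref{Leta} is well defined on $\mathcal{C}^{2}(\mathbb{R}_{\ge 0})$.

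To pass to \eqref{Leta2}, note that near $u=1$ one has $u^{2}(u^{2}-1)^{-2}\sim\tfrac14(u-1)^{-2}$ and $f(xu)-f(x)\sim xf'(x)(u-1)$, so the integrand in \eqref{Leta2} has a non-integrable singularity of order $(u-1)^{-1}$ there, which is exactly why the principal value is needed; away from $u=1$ the integral converges as before. Writing $f(xu)-f(x)=\bigl(f(xu)-f(x)-xf'(x)\log u\bigr)+xf'(x)\log u$ and integrating,
\begin{align*}
(PV)\!\!\int_0^\infty&\bigl(f(xu)-f(x)\bigr)\frac{u^{2}}{(u^{2}-1)^{2}}\,\D u\\
&=\int_0^\infty\bigl(f(xu)-f(x)-xf'(x)\log u\bigr)\frac{u^{2}}{(u^{2}-1)^{2}}\,\D u\\
&\quad+xf'(x)\,(PV)\!\!\int_0^\infty\frac{u^{2}\log u}{(u^{2}-1)^{2}}\,\D u ,
\end{align*}
where the first integral on the right is the absolutely convergent one appearing in \eqref{Leta}. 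Folding $(0,1)$ onto $(1,\infty)$ by $u\mapsto 1/u$ collapses the last integral to the genuinely convergent $\int_1^\infty\frac{\log u}{u^{2}-1}\,\D u$, which, after applying $u\mapsto 1/u$ once more and expanding $(1-t^{2})^{-1}=\sum_{n\ge 0}t^{2n}$, equals $\sum_{n\ge 0}(2n+1)^{-2}=\pi^{2}/8$. Hence $\tfrac{4}{\pi x}\cdot xf'(x)\cdot\tfrac{\pi^{2}}{8}=\tfrac{\pi}{2}f'(x)$, so multiplying the displayed identity through by $\tfrac{4}{\pi x}$ converts \eqref{Leta} into \eqref{Leta2}.

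The analytic content beyond these manipulations is the single elementary evaluation $\int_1^\infty\log u\,(u^{2}-1)^{-1}\,\D u=\pi^{2}/8$; the genuinely delicate points are of a bookkeeping nature. Specifically, one must make the Lamperti generator identity rigorous on a precise core of functions — pinning down the growth of $f$ at $0$ and at $\infty$ ensuring that $f\circ\exp$ lies in the domain of $\mathcal{L}^\eta$ and that all the integrals above converge absolutely away from $u=1$ — and one must justify the splitting of the principal value, in particular verifying that the symmetric excision about $u=1$ is preserved, up to a negligible error, under the reflection $u\mapsto 1/u$ used to evaluate the residual integral. I expect the latter to be the only place where any real care is required.
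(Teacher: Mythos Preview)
Your proof is correct and follows essentially the same route as the paper: obtain \eqref{Leta} from the Lamperti representation together with the L\'evy--Khintchine data of $\eta$ (the paper simply cites \cite{CC06} for this step), and then reduce the passage to \eqref{Leta2} to the single evaluation $(PV)\!\int_0^\infty u^{2}\log u\,(u^{2}-1)^{-2}\,\D u=\pi^{2}/8$. The only difference is cosmetic: the paper computes this integral via the substitution $u=\mathrm{e}^{x}$, the symmetry $x\mapsto -x$, and the tabulated identity $\int_0^\infty x/\sinh x\,\D x=\pi^{2}/4$, whereas you use the multiplicative fold $u\mapsto 1/u$ and a geometric-series expansion; the paper also records, as an aside, an alternative derivation of \eqref{Leta2} directly from the explicit density $q^{(1),*}_t$ in Corollary~\ref{cor}.
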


\begin{proof}Because of  the arrangement of the characteristic exponent in \eqref{arrangement}, from \cite{CC06}, we know that its generator can be accordingly arranged to have action on $f\in \mathcal{C}^2(\mathbb{R}_{\geq 0})$ given by
\begin{align}
\mathcal{L}  f(x)& = \frac{\pi}{2}f'(x)+\frac{4}{\pi x}\int_0^\infty\left(f(xu)-f(x)-xf'(x)\log u\right)\frac{u^2}{(u^2-1)^2}\D u,
\qquad x>0.
\label{Leta}
\end{align}
For the second statement of the corollary, we need to show  that 
\begin{equation}
I: =(PV)\!\!\int_0^\infty \frac{u^2\log u}{(u^2-1)^2}\D u= \frac{\pi^2}{8}
\label{I}
\end{equation}
and that 
\[
(PV)\!\!\int_0^\infty\left(f(xu)-f(x)\right)\frac{u^2}{(u^2-1)^2}\D u
\]
is well defined. The latter is easily done on account of the fact that, near the singularity $u= 1$, $f(ux) - f(x) \approx (u-1)xf'(x) + O((u-1)^2)$,  $ x,u>0$, so that we can estimate the principal value of the integral there using partial fractions. 

To see why the  equality in \eqref{I} holds, note that after a change of variable $u ={\rm e}^{x}$ we see  
\begin{equation}
I =(PV)\!\! \int_{-\infty}^\infty \frac{x {\rm e}^x }{ ({\rm e}^x-{\rm e}^{-x})^2 } \D x = - (PV)\!\!\int_{-\infty}^\infty \frac{x {\rm e}^{-x} }{ ({\rm e}^x-{\rm e}^{-x})^2 } \D x,
\label{2I}
\end{equation}
where in the second equality  we have noted the simple change of variables $x\mapsto-x$.
It thus follows by adding the two integrals in \eqref{2I} together that 
\[
I = \frac{1}{2}  \int_{-\infty}^\infty \frac{x }{ ({\rm e}^x-{\rm e}^{-x}) } \D x = \frac{1}{2} \int_0^\infty \frac{x}{\sinh x} \D x=  \frac{\pi^2}{8}.
\]
where the final equality follows from equation 3.521.1 of \cite{GR}.  

Note, another way to approach the second part of the corollary is to use the standard definition of a Feller generator  on  $\mathcal{C}_c^\infty(\mathbb{R}_\geq 0)$, the space of compactly supported  smooth functions; cf \cite{KS19}. We have
\[
\mathcal{L}f (x) 
=\lim_{t\to0} \frac{1}{t}\left(
\int_0^\infty f(y)q^{(1),*}_t(x,y) - f(x)\right), \qquad x>0.
\]
Making use of \eqref{newSG} and monotone convergence, again taking note that the singularity in the integral can be dealt with in a similar manner, we see that
\begin{align}
\mathcal{L}f (x)& = \lim_{t\to0}\frac{4}{\pi}(PV)\!\!\int_0^\infty \left(f(y)-f(x)\right)\frac{y^2}{ (y^2-x^2)^2 +2t^2 (y^2 + x^2) + t^4 }\D y\notag \\
& = \frac{4}{\pi}(PV)\!\!\int_0^\infty \left(f(y)-f(x)\right)\frac{y^2}{ (y^2-x^2)^2 }\D y, \qquad x>0,
\label{firstL}
\end{align}
which agrees with \eqref{Leta2} after a simple change of variables.
\end{proof}

\subsection*{Connection to Cauchy process conditioned to stay positive}
It is also worthy of note in the general case $\alpha\in(0,2)$ that the process $Y$ does not agree with the law of a one-dimensional symmetric stable process conditioned to stay positive. 
The latter can be understood via the exact same limiting process in \eqref{condition}, again replacing the role of Brownian motion by that of the one-dimensional stable  process, inducing a new family of probabilities $(\texttt{P}^{1,1, \uparrow}_x, x>0)$ on $\mathbb{D}(\mathbb{R}_{\geq 0})$. Rather than corresponding to the change of measure \eqref{COM},the law of the Cauchy process conditioned to stay positive is related to that of the Cauchy process via   
 \begin{equation}
\left. \frac{\D \texttt{P}_x^{1,1, \uparrow}}{\D \texttt{P}^{1,1}_x}\right|_{\mathcal{F}_t} = \left(\frac{X_t}{x} \right)^{1/2}\mathbf{1}_{\{t<\tau^-_0(X)\}}, \qquad x>0, t\geq 0,
\label{COMuparrow}
 \end{equation}
 where  $\tau^-_0(X) = \inf\{t>0: X_t<0\}$.
 
\medskip

 There is nonetheless a close relationship between $(\texttt{P}_x,x>0)$ and $(\texttt{P}^{1,1, \uparrow}_x, x>0)$,
  which is best seen through the Lamperti representation \eqref{pssMp2}. Suppose we write $\Psi^\uparrow$ for the characteristic exponent of the L\'evy process that underlies the Cauchy process conditioned to stay positive. It is known from \cite{CC06}  (see also Chapter 5 of \cite{KPbook}) that 
\begin{equation}
\Psi^\uparrow(z) = \Psi(2z), \qquad z\in\mathbb{R}.
\label{2arg}
\end{equation}
If we write $\mu^\uparrow$ for the L\'evy measure associated to $\Psi^\uparrow$. This is equivalent to saying that $2\mu^\uparrow(x)= \mu(x/2)$, or indeed that the L\'evy process underlying the Cauchy process conditioned to stay positive is equal in law to $2\eta$. 
This is a curious relationship which is clearly related to the fact that the Doob $h$-transform in the definition \eqref{newSG} uses $h(x) = x$, whereas the Doob $h$-transform in \eqref{COMuparrow} uses $h(x) = \sqrt{x}$. It is less clear if or how this relationship extends to other values of $\alpha$.
From Lemma 2.2 in \cite{Pierre} we can now identify the following simple relationship.

\begin{cor} Denote by $Y^{\uparrow} = (Y^{\uparrow}_t, t\geq0)$ is the co-ordinate process of a  one-dimensional Cauchy process conditioned to stay positive. Then with $Y$ denoting the process in Theorem \ref{CauchyDM}, we have  space-time path transformation relating $Y$ to $Y^\uparrow$,
\[
(Y^\uparrow_t,t\geq0) \,\,\substack{\text{law}\\=} \,\,\Big((Y_{\chi(t)})^2, t\geq0\Big), \text{ where }\chi(t) = \inf\{s>0:\int_0^t Y^{-1}_u\D u>t\}, \qquad t\geq 0.
\]
\end{cor}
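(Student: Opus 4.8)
The plan is to exploit the two Lamperti representations side by side. Both $Y$ and $Y^\uparrow$ are positive self-similar Markov processes of index $1$: for $Y$ this is the content of Theorem \ref{CauchyDM}, and for $Y^\uparrow$ it follows because a one-dimensional Cauchy process conditioned to stay positive is a pssMp of index $1$ (its underlying L\'evy process has characteristic exponent $\Psi^\uparrow$). Write $Y_t = \exp(\xi_{\varphi(t)})$ with $\xi$ the L\'evy process of \eqref{pssMp2}, which by the second part of Theorem \ref{CauchyDM} has characteristic exponent $\Psi$, and write $Y^\uparrow_t = \exp(\xi^\uparrow_{\varphi^\uparrow(t)})$ where $\xi^\uparrow$ has characteristic exponent $\Psi^\uparrow$. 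The key algebraic input, already recorded in \eqref{2arg}, is that $\Psi^\uparrow(z) = \Psi(2z)$, which says precisely that $\xi^\uparrow \overset{d}{=} 2\xi$ as L\'evy processes.

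First I would square $Y$: since squaring is a deterministic monotone bijection of $[0,\infty)$ and $(Y_t)^2 = \exp(2\xi_{\varphi(t)})$, the process $(Y_t)^2$ is again a pssMp, but now of index $1/2$, whose underlying L\'evy process is $2\xi$, i.e.\ has characteristic exponent $\Psi(2\,\cdot\,) = \Psi^\uparrow$. So $(Y_t)^2$ and $Y^\uparrow$ are driven by the \emph{same} L\'evy process through the Lamperti transform, but with \emph{different} self-similarity indices ($1/2$ versus $1$), hence different time changes. The second step is therefore to reconcile the two time changes: the Lamperti time change for a pssMp of index $\beta$ started from $x$ involves the clock $\int_0^s \exp(\beta\xi^\uparrow_u)\,\d u$ against elapsed time. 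This is exactly the situation covered by Lemma 2.2 of \cite{Pierre}, which relates two pssMps sharing an underlying L\'evy process but having different indices via an additive time change built from the path of the process itself. Concretely, passing from index $1$ to index $1/2$ (equivalently, comparing the clocks $\int \exp(\xi^\uparrow_u)\d u$ and $\int \exp(\tfrac12\xi^\uparrow_u)\d u$) introduces precisely the time change $\chi(t) = \inf\{s>0 : \int_0^s Y_u^{-1}\,\d u > t\}$, because $Y_u^{-1} = \exp(-\xi_{\varphi(u)})$ is the infinitesimal rescaling factor that interpolates between the two clocks. I would therefore invoke that lemma to conclude $(Y^\uparrow_t, t\geq 0) \overset{\mathrm{law}}{=} ((Y_{\chi(t)})^2, t\geq 0)$.

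The main obstacle I anticipate is \emph{bookkeeping of the time changes and the index arithmetic}, rather than any deep probabilistic difficulty. One must check carefully that squaring a pssMp of index $\alpha$ yields a pssMp of index $\alpha/2$ with the L\'evy process doubled (the scaling property \eqref{scalingC} for $\alpha=1$ makes this transparent: if $(cY_{c^{-1}t})\overset{d}{=}(Y)$ under the appropriate shift of starting point, then $(c^2 Y_{c^{-1}t}^2) = ((cY_{c^{-1}t})^2)\overset{d}{=} (Y^2)$, and setting $c^2 = c'$ gives index $1/2$), and then that the particular form of the clock in Lemma 2.2 of \cite{Pierre} specializes to $\int_0^s Y_u^{-1}\d u$ with the stated right-continuous inverse $\chi$. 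A secondary point to verify is that the lifetimes match: since $\eta$ (equivalently $\xi$) drifts to $+\infty$, both $Y$ and $Y^\uparrow$ are conservative and transient, so $\int_0^\infty Y_u^{-1}\d u = \infty$ almost surely and $\chi(t)$ is finite for every $t$, making the path transformation globally well defined. Once these routine identifications are in place, the corollary is immediate from Lemma 2.2 of \cite{Pierre} together with \eqref{2arg}.
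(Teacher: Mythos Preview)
Your proposal is correct and follows exactly the route the paper takes: the paper simply records that $\Psi^\uparrow(z)=\Psi(2z)$, i.e.\ the L\'evy process underlying $Y^\uparrow$ equals $2\eta$ in law, and then invokes Lemma~2.2 of \cite{Pierre} to obtain the corollary without further argument. Your write-up is in fact more detailed than the paper's, which gives no separate proof beyond the citation.
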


\subsection*{Pathwise representation}
One way to understand the Doob-McKean in the Cauchy setting is to consider it via a path transformation which mirrors the proof of Theorem \ref{CauchyDM}. Think of a two-dimensional Brownian motion $ \mathbb{P}^{(2)}$  on the $x$-$y$ plane which is stopped when hits the line $x = t$, that is at the time $\Gamma_t = \inf\{s>0 : \pi_x(\sqrt{2}B^{(2)}_s ) = t\}$, where $\pi_x$ is the projection of $\sqrt{2}B^{(2)}$ onto the $x$-axis. It is well known that $\Gamma_t$ is a \sfrac{1}{2}-stable subordinator and that 
$(\pi_y(\sqrt{2}B^{(2)}_{\Gamma_t}), t\geq 0)$ is  a Cauchy process where $\pi_y$ is the projection on to the $y$-axis. 

\medskip

Suppose now we replace $B^{(2)}$ by the $x$-$y$ planar process $(B, R)$, where $B$ is a one-dimensional Brownian motion and $R$ is an independent Bessel-3 process. Noting that $R$ is a Doob $h$-transform of $\pi_y(\sqrt{2}B^{(2)})$ killed on hitting the $x$-axis,  the independence of $B$ and $R$, and hence the independence of $(\Gamma_t, t\geq0)$ and $R$ means that the process $(\sqrt{2}R_{\Gamma_t}, t\geq0)$ agrees precisely with the transformation on the right-hand side of \eqref{newSG} with $\alpha = 1$.
\begin{figure}[h!]
\begin{center}
\includegraphics[width= 0.8\textwidth]{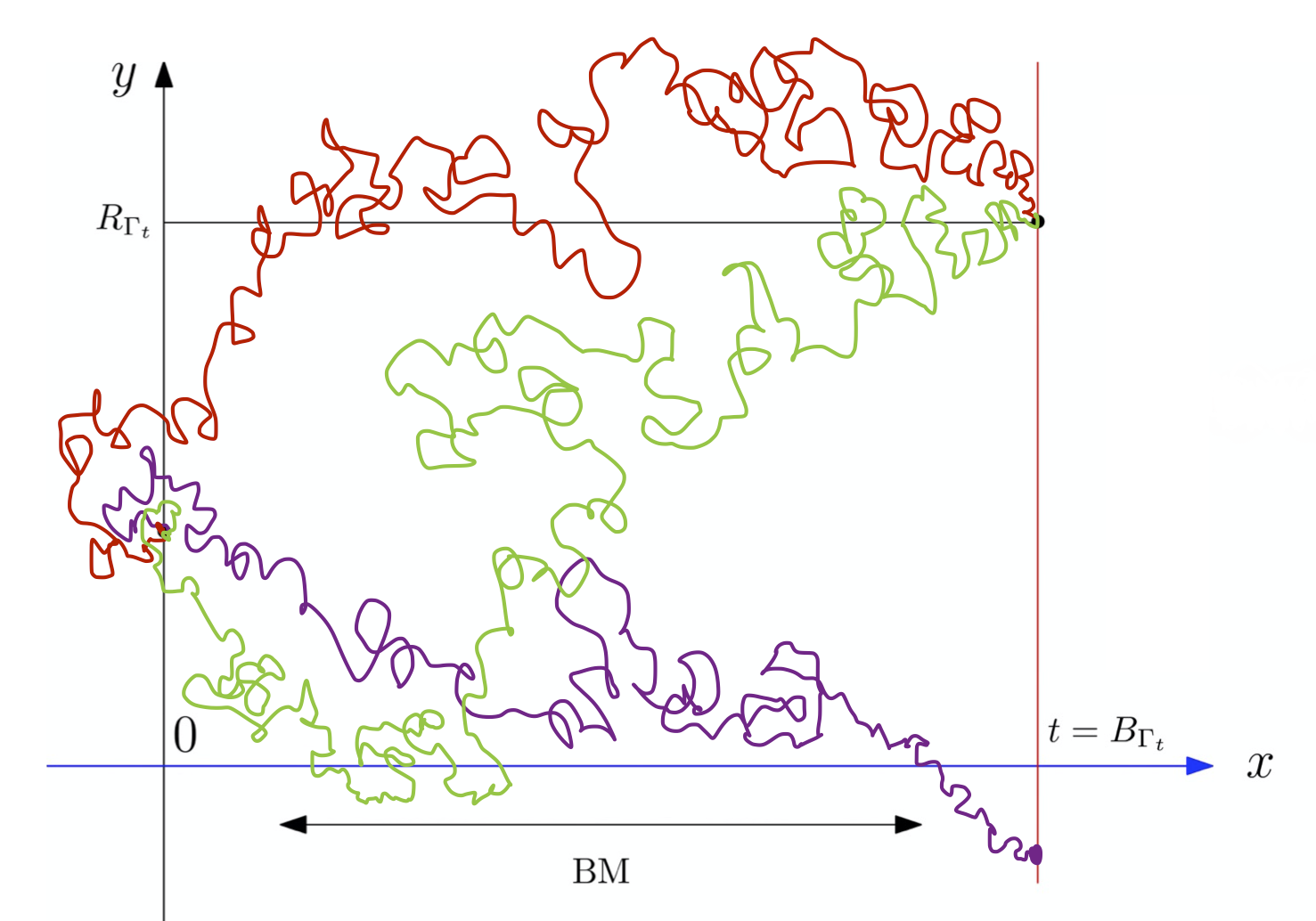}
\end{center}
\caption{\rm A pathwise representation of the Doob-McKean transformation for Cauchy processes. The red path depicts a sample path from the process $(B, R)$, where $B$ is a Brownian motion in the direction of the $x$ axis and $R$ is a Bessel-3 process in the direction of the $y$ axis, until it hits the vertical line $x = t$. The green and purple paths are sample paths from the the two dimensional Brownian motion  $B^{(2)}$ until first hitting of the vertical line $x = t$.} 
\label{BessBM}
\end{figure}

\subsection*{Generators}

We know that the generator of the process $Y$ in Theorem \ref{CauchyDM} is given by \eqref{Leta2}. The pathwise representation in the previous section, captured e.g. in Figure \ref{BessBM} also gives us some insight into the structure of the generator \eqref{Leta2}.

\medskip

As alluded to above, if $B$ is a one-dimensional Brownian motion, then $(\sqrt{2}B_{\Gamma_t}, t\geq 0)$ is a Cauchy process. Its generator $\mathcal{C}$ is written 
\begin{equation}
\mathcal{C}f(x) = \frac{1}{\pi}(PV)\int_{-\infty}^\infty \frac{f(y) - f(x)}{(y-x)^2}\dd y, \qquad f\in \mathcal{C}_c^\infty(\mathbb{R}_\geq 0).
\label{C}
\end{equation}
We want to connect the generator $\mathcal{C}$ with the processes $Y$ we see in the path decomposition, in particular withthe process $(R_{\Gamma_t}, t\geq0)$.

\medskip
 
We know from \eqref{transitions} that a Bessel-3 process is the result of Doob $h$-transforming the law of a Brownian motion killed on entry to $(-\infty,0)$. We have also seen e.g. in the proof of Theorem \ref{CauchyDM} that subordination with the \sfrac{1}{2}-stable process $(\Gamma_t, t\geq0)$ preserves the effect of the Doob $h$-transform. What we would like to understand is how the \sfrac{1}{2}-stable subordination of killed Brownian motion, i.e. $(q^{(1/2)}_t(x,y) - q^{(1/2)}_t(x,-y))$, plays out in \eqref{C}.

\medskip

To this end, we can think of jump rate  from $x\geq 0$ to $y\geq 0$ of the sub-Markov process with semigroup $(q^{(1/2)}_t(x,y) - q^{(1/2)}_t(x,-y))$,  as  being derived from a principal of `path counting' using jump rates of the Cauchy process. The generator of a Cauchy process killed on exiting the upper half line is given by 
\[
\mathcal{C}_+f(x) - \frac{1}{\pi x}\quad\text{ where }\quad\mathcal{C}_+f(x) : =  \frac{1}{\pi}\int_0^\infty \frac{f(y)-f(x)}{(y-x)^2}\dd y, \qquad f\in \mathcal{C}_c^\infty(\mathbb{R}_\geq 0).
\]
Indeed, the aforesaid process jumps from $x\geq 0$ to $y\geq 0$ at rate  $1/\pi(y-x)^2{\rm d}y$, however, we must subtract from this rate, the rate at which killing occurs by jumping from $x$ into the negative half line. The latter is 
\[
\frac{1}{\pi}\int_{-\infty}^0 \frac{1}{(y-x)^2}\dd y = \frac{1}{\pi}\int_{x}^\infty \frac{1}{z^2}\dd z = \frac{1}{\pi x}.
\]
%
The combined effect of reflection principal and  \sfrac{1}{2}-stable subordination, suggests we must also subtract the rate at which jumps from $x\geq 0$ to $y\geq 0$ occur as the reflection of jumps from $x$ to $-y$, with the additional effect of killing on the lower half line, i.e. 
\[
\mathcal{C}_-f(x) - \frac{1}{\pi x}\quad\text{ where }\quad\mathcal{C}_-f(x) = \frac{1}{\pi}\int_0^\infty \frac{f(y)-f(x)}{(x+y)^2}\dd y , \qquad f\in \mathcal{C}_c^\infty(\mathbb{R}_\geq 0).
\]
We can thus identify the generator of $Y$, $\mathcal{L}$, as the following Doob $h$-transform.
\begin{lemma}We have
\[
\mathcal{L} f(x) = \frac{1}{x}\mathcal{D}(xf(x)), \qquad f\in \mathcal{C}_c^\infty(\mathbb{R}_\geq 0), x>0,
\]
where
\[
\mathcal{D} = \mathcal{C}_+-\mathcal{C}_- -\frac{2}{\pi x}.
\]
\end{lemma}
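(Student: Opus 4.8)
The plan is to verify the claimed identity $\mathcal{L}f(x) = \frac{1}{x}\mathcal{D}(xf(x))$ by direct computation, starting from the principal-value representation of $\mathcal{L}$ established in Corollary \ref{L2}, namely
\[
\mathcal{L}f(x) = \frac{4}{\pi}(PV)\!\!\int_0^\infty \bigl(f(y)-f(x)\bigr)\frac{y^2}{(y^2-x^2)^2}\,\dd y, \qquad x>0,
\]
and comparing it term-by-term with the right-hand side, in which $\mathcal{D} = \mathcal{C}_+ - \mathcal{C}_- - \frac{2}{\pi x}$ acts on the function $g(y) := y f(y)$. First I would write out $\frac{1}{x}\mathcal{D}(g)(x)$ explicitly: the $\mathcal{C}_+$ piece contributes $\frac{1}{\pi x}(PV)\!\int_0^\infty \frac{yf(y)-xf(x)}{(y-x)^2}\,\dd y$, the $-\mathcal{C}_-$ piece contributes $-\frac{1}{\pi x}\int_0^\infty \frac{yf(y)-xf(x)}{(x+y)^2}\,\dd y$, and the multiplication operator $-\frac{2}{\pi x}$ acting on $g$ and then divided by $x$ contributes $-\frac{2}{\pi x^2}\cdot x f(x) = -\frac{2f(x)}{\pi x}$.

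The key algebraic step is the partial-fraction identity
\[
\frac{1}{(y-x)^2} - \frac{1}{(y+x)^2} = \frac{4xy}{(y^2-x^2)^2},
\]
which, after combining the $\mathcal{C}_+$ and $\mathcal{C}_-$ contributions over a common structure, should convert $\frac{1}{\pi x}\bigl[\frac{1}{(y-x)^2}-\frac{1}{(y+x)^2}\bigr](yf(y)-xf(x))$ into something proportional to $\frac{y^2}{(y^2-x^2)^2}(f(y)-f(x))$ plus lower-order remainder terms. The remainder terms — those coming from the $xf(x)$ and from the asymmetry between $yf(y)$ and $xf(x)$ — need to be shown to combine with the $-\frac{2f(x)}{\pi x}$ term and with a convergent tail integral so that everything collapses to the clean expression for $\mathcal{L}f(x)$. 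Concretely, one writes $yf(y)-xf(x) = y(f(y)-f(x)) + (y-x)f(x)$ and tracks the two resulting pieces separately; the first piece times $\frac{4xy}{(y^2-x^2)^2}$ gives, after dividing by $x$, exactly $\frac{4y^2}{\pi(y^2-x^2)^2}(f(y)-f(x))$ — wait, this already has the right shape up to the overall constant $\frac{4}{\pi}$ — and the second piece together with the killing terms must integrate (in principal value) to zero. Verifying that this second bundle vanishes is a short residual computation, using $\int_0^\infty\bigl[\frac{1}{(y-x)^2}-\frac{1}{(y+x)^2}\bigr]\frac{\dd y}{y}$-type evaluations analogous to the $\frac{1}{\pi x}=\frac{1}{\pi}\int_{-\infty}^0\frac{\dd y}{(y-x)^2}$ computation already carried out in the text.

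The main obstacle is bookkeeping around the principal-value singularity at $y=x$: the operators $\mathcal{C}_+$, $\mathcal{C}_-$, the multiplication term, and the target integral for $\mathcal{L}$ each individually diverge or are only conditionally convergent near $y=x$, so the decomposition above must be performed with a symmetric cutoff $|y-x|>\varepsilon$ in place throughout, and one must check that the pieces declared to "vanish" actually have cancelling $\log\varepsilon$ or $O(1)$ boundary contributions as $\varepsilon\to 0$. Since $f\in\mathcal{C}_c^\infty(\mathbb{R}_{\geq 0})$, near $y=x$ one has $yf(y)-xf(x) = (g(x) + g'(x)(y-x) + O((y-x)^2)) - g(x) = g'(x)(y-x)+O((y-x)^2)$ with $g(y)=yf(y)$, so the genuinely singular part integrates to an odd function against $1/(y-x)^2$ and its principal value is finite; the same Taylor argument was already invoked in the proof of Corollary \ref{L2}. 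Once the cutoff is handled uniformly across all four terms, the identity follows from the partial-fraction identity and the elementary tail integrals, so no deep input beyond Corollary \ref{L2} is needed.
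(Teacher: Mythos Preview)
Your proposal is correct and is essentially identical to the paper's own argument: write out $\tfrac{1}{x}\mathcal{D}(xf(x))$, combine the $\mathcal{C}_+$ and $\mathcal{C}_-$ pieces via the partial-fraction identity $\tfrac{1}{(y-x)^2}-\tfrac{1}{(y+x)^2}=\tfrac{4xy}{(y^2-x^2)^2}$, split the numerator to isolate $\tfrac{4}{\pi}\int_0^\infty\tfrac{y^2(f(y)-f(x))}{(y^2-x^2)^2}\,\dd y$, and then verify that the leftover terms cancel. The one concrete fact you will need for that last cancellation (your ``short residual computation'') is exactly the identity the paper records, namely $(PV)\!\int_0^\infty \tfrac{2xy}{(y-x)(y+x)^2}\,\dd y = 1$.
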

\begin{proof}
We compute (all integrals are Cauchy principal value integrals):
\begin{eqnarray*}
\frac1{x} \mathcal{D}(xf(x)) &=&
\frac1{\pi x} \int_0^\infty \frac{yf(y)-xf(x)}{(y-x)^2} dy
- \frac1{\pi x} \int_0^\infty \frac{yf(y)-xf(x)}{(y+x)^2} dy - \frac{2}{\pi x} f(x)\\
&=& \frac1{\pi x} \int_0^\infty \frac{4xy(yf(y)-xf(x))}{(y^2-x^2)^2} dy - \frac{2}{\pi x} f(x)\\
&=&  \frac{4}{\pi } \int_0^\infty \frac{y^2(f(y)-f(x))}{(y^2-x^2)^2} dy
+  \frac{4}{\pi } \int_0^\infty \frac{(y^2-xy)f(x)}{(y^2-x^2)^2} dy  - \frac{2}{\pi x} f(x)\\
&=& \mathcal{L} f(x),
\end{eqnarray*}
where the last identity follows from the definition of $\mathcal{L}$ and the fact that
$$(PV) \int_0^\infty \frac{2xy}{(y-x)(y+x)^2} dy =1.$$
\end{proof}
\medskip

Note that the `reflected' Cauchy process has generator $\mathcal{C}_R=\mathcal{C}_+ + \mathcal{C}_-$,
and we earlier identified  the  Cauchy process killed on going negative as having  generator $\mathcal{C}_A=\mathcal{C}_+ - 1/(\pi x)$.
These are related to the generator $\mathcal{D}$ via $\mathcal{C}_A=(\mathcal{D}+\mathcal{C}_R)/2$.
The spectral problem associated with the Cauchy process on the half-line
with `reflecting' boundary is equivalent to the so-called `sloshing problem'
in the theory of linear water waves, and this has been extensively studied \cite{FK}.
The spectral problem associated with the Cauchy process on the half-line
with absorbing boundary conditions has been completely solved in \cite{KKMS}.



\section{Concluding remarks}
Elliot and Feller \cite{EF} consider various examples of Cauchy processes constrained
to stay in a compact interval $[0,a]$.  One of the examples they consider (Example (d)
in their paper), has transition density
\begin{equation}\label{ef}
p_t(x,y) = \sum_{n=-\infty}^\infty [q_t(x,2an+y)-q_t(x,2an-y)],
\end{equation}
where $q_t(x,y)$ is the transition density of the one-dimensional Cauchy process.
They remark that \eqref{ef} defines {\em `a transition semi-group and determines a Markovian process,
but it is not the absorbing barrier process. [\ $\cdots$]\ It is not clear whether and how the process is
related to the Cauchy process.'}
In fact, the process considered in \cite{EF} is a Brownian motion in $[0,a]$ with Dirichlet
boundary conditions, time-changed by an independent stable subordinator of index $1/2$.
Moreover, it may be interpreted in terms of the Cauchy process via a similar pathwise interpretation
to the one outlined above for the half-line.

It is also natural to consider multi-dimensional versions.  For example, Dyson
Brownian motion is a Brownian motion in $\R^n$ conditioned never to
exit the Weyl chamber $C=\{x\in\R^n:\ x_1>\cdots>x_n\}$.  Its transition
density is given by
$$d_t(x,y) = h(x)^{-1} h(y) \sum_{\sigma\in S_n} \sgn(\sigma) p_t(x,\sigma y),$$
where the sum is over permutations, $\sigma y$ is the vector $y$ with
components permuted by $\sigma$, $h(x)=\prod_{i<j}(x_i-x_j)$ is the Vandermonde
determinant, and $p_t(x,y)$ is the standard Gaussian heat kernel in $\R^n$.
If we time-change this process by an independent stable subordinator of index $\alpha/2$,
and multiply by a factor of $\sqrt{2}$,
then the resulting process in $C$ has transition density
$$D_t(x,y) = h(x)^{-1} h(y) \sum_{\sigma\in S_n} \sgn(\sigma) P^{(\alpha)}_t(x,\sigma y),$$
where $P^{(\alpha)}_t(x, y)$ is the transition density of the isotropic $n$-dimensional stable
process with index $\alpha$.
We note that, in the case $\alpha=1$, this time-changed process may be interpreted as
the `radial part' of a $1$-Cauchy process in $\R^n$, as discussed in Section 5 of the paper \cite{ROSLER1998575}.

\section*{Acknowledgement}
Both authors would like to thank an anonymous referee for their remarks which lead to an improved version of this paper.
\bibliography{references}{}
\bibliographystyle{plain}

\end{document}